\DeclareMathAlphabet{\mathpzc}{OT1}{pzc}{m}{it}
\begin{document}
\title[Regular CW-complexes and poset resolutions]{Regular CW-complexes and poset resolutions of monomial ideals}
\author{Timothy B.P. Clark and Alexandre Tchernev}
\date{\today}

\newtheorem{intheorem}{Theorem} 
\newtheorem{inlemma}[intheorem]{Lemma} 
\newtheorem{inproposition}[intheorem]{Proposition} 
\newtheorem{incorollary}[intheorem]{Corollary}
\newtheorem{indefinition}[intheorem]{Definition} 
\newtheorem{inremark}[intheorem]{Remark} 
\newtheorem{innotation}[intheorem]{Notation}

\newtheorem{theorem}[equation]{Theorem} 
\newtheorem{lemma}[equation]{Lemma} 
\newtheorem{proposition}[equation]{Proposition} 
\newtheorem{corollary}[equation]{Corollary} 
\newtheorem{observation}[equation]{Observation} 
\newtheorem{conjecture}[equation]{Conjecture} 
\newtheorem{question}[equation]{Question}
\newtheorem{fact}[equation]{Fact} 
\newtheorem{example}[equation]{Example} 
\newtheorem{definition}[equation]{Definition} 
\newtheorem{remark}[equation]{Remark} 
\newtheorem{remarks}[equation]{Remarks} 
\newtheorem{notation}[equation]{Notation} 
\newtheorem*{acknowledgements}{Acknowledgements}

\newenvironment{indented}{\begin{list}{}{}\item[]}{\end{list}} 

\renewcommand{\:}{\! :\ } 
\newcommand{\p}{\mathfrak p} 
\newcommand{\m}{\mathfrak m}
\newcommand{\e}{\epsilon} 
\newcommand{\g}{{\bf g}} 
\newcommand{\lra}{\longrightarrow} 
\newcommand{\ra}{\rightarrow} 
\newcommand{\altref}[1]{{\upshape(\ref{#1})}} 
\newcommand{\bfa}{\mathbf a} 
\newcommand{\bfb}{\boldsymbol{\beta}} 
\newcommand{\bfg}{\boldsymbol{\gamma}} 
\newcommand{\bfd}{\boldsymbol{\delta}} 
\newcommand{\bfM}{\mathbf M} 
\newcommand{\bfN}{\mathbf N}
\newcommand{\bfI}{\mathbf I} 
\newcommand{\bfC}{\mathbf C} 
\newcommand{\bfB}{\mathbf B} 
\newcommand{\mcP}{\mathcal P}
\newcommand{\mcX}{\mathcal X}
\newcommand{\D}{\textbf{\textup{D}}}
\newcommand{\bsfC}{\bold{\mathsf C}} 
\newcommand{\bsfT}{\bold{\mathsf T}}
\newcommand{\mc}{\mathcal} 
\newcommand{\smsm}{\smallsetminus} 
\newcommand{\ol}{\overline} 
\newcommand{\twedge}
           {\smash{\overset{\mbox{}_{\circ}}
                           {\wedge}}\thinspace} 
\newcommand{\mbb}[1]{\mathbb{#1}}
\newcommand{\pring}{\Bbbk[x_1,\ldots,x_n]}
\newcommand{\irr}{(x_1,\ldots,x_d)}
\newcommand{\Z}{\textup{Z}}
\newcommand{\B}{\textup{B}}
\newcommand{\La}{\mathcal{L}}
\newcommand{\redHo}{\widetilde{\textup{H}}}
\newcommand{\redH}[2]{\widetilde{\textup{H}}_{#1}(#2)}
\newcommand{\pr}{\textup{proj}}
\newcommand{\precdot}{\prec\!\!\!\cdot\;}
\newcommand{\succdot}{\;\cdot\!\!\!\succ}
\newcommand{\vset}[2]{\textbf{V}_{{#1},{#2}}}
\newcommand{\sx}[2]{\textbf{B}_{{#1},{#2}}}
\newcommand{\cplx}[2]{\Delta_{{#1},{#2}}}
\newcommand{\vx}[1]{(\varnothing,#1)}
\newcommand{\ds}{\displaystyle}
\newcommand{\Sy}{\Sigma}
\newcommand{\Ho}{\widetilde{H}}
\newcommand{\sgn}{\textup{sgn}}
\newcommand{\CC}{\widetilde{\mathcal{C}}_\bullet}
\newcommand{\ld}{\lessdot}
\newcommand{\mdeg}{\textup{mdeg}}
\newcommand{\id}{\textup{id}}
\newcommand{\sd}{\textbf{sd}}
\newcommand{\bd}{\textup{Bd}}
\newcommand{\bb}{\textbf{b}}
\newcommand{\lcm}{\mathrm{lcm}}
\newcommand{\rk}{\textup{rk}}

\newlength{\wdtha}
\newlength{\wdthb}
\newlength{\wdthc}
\newlength{\wdthd}
\newcommand{\elabel}[1]
           {\label{#1}  
            \setlength{\wdtha}{.4\marginparwidth}
            \settowidth{\wdthb}{\tt\small{#1}} 
            \addtolength{\wdthb}{\wdtha}
            \smash{
            \raisebox{.8\baselineskip}
            {\color{red}
             \hspace*{-\wdthb}\tt\small{#1}\hspace{\wdtha}}}}  

\newcommand{\mlabel}[1] 
           {\label{#1} 
            \setlength{\wdtha}{\textwidth}
            \setlength{\wdthb}{\wdtha} 
            \addtolength{\wdthb}{\marginparsep} 
            \addtolength{\wdthb}{\marginparwidth}
            \setlength{\wdthc}{\marginparwidth}
            \setlength{\wdthd}{\marginparsep}
            \addtolength{\wdtha}{2\wdthc}
            \addtolength{\wdtha}{2\marginparsep} 
            \setlength{\marginparwidth}{\wdtha}
            \setlength{\marginparsep}{-\wdthb} 
            \setlength{\wdtha}{\wdthc} 
            \addtolength{\wdtha}{1.1ex}
            \marginpar{\vspace*{-0.3\baselineskip}
                       \tt\small{#1}\\[-0.4\baselineskip]\rule{\wdtha}{.5pt} }
            \setlength{\marginparwidth}{\wdthc} 
            \setlength{\marginparsep}{\wdthd}  }  
            
\renewcommand{\mlabel}{\label} 
\renewcommand{\elabel}{\label} 

\newcommand{\mysection}[1]
{\section{#1}\setcounter{equation}{0}
             \numberwithin{equation}{section}}

\newcommand{\mysubsection}[1]
{\subsection{#1}\setcounter{equation}{0}
                \numberwithin{equation}{subsection}}

\newcommand{\mysubsubsection}[1]
{\subsubsection{#1}\setcounter{equation}{0}
                \numberwithin{equation}{subsubsection}}

\maketitle                
\begin{abstract}
We use the natural homeomorphism between a regular CW-complex 
$X$ and its face poset $P_X$ to establish a canonical 
isomorphism between the cellular chain complex of $X$ 
and the result of applying the poset construction of \cite{Clark2010} to $P_X$. 
For a monomial ideal whose free resolution 
is supported on a regular CW-complex, this isomorphism 
allows the free resolution of the ideal to be realized 
as a CW-poset resolution.  Conversely, 
any CW-poset resolution of a monomial ideal gives rise 
to a resolution supported on a regular CW-complex. 
\end{abstract}

\section*{Introduction}
A {plethora} of commutative algebra research 
has centered on the search for combinatorial and 
topological objects whose structure can 
be exploited to give explicit constructions of 
free resolutions. Many 
approaches take advantage of the combinatorial data inherent 
in the $\mathbb{Z}^{n}$-grading of a monomial ideal $N$ and 
produce a resolution of the module $R/N$. 
In particular, the search for regular CW-complexes 
which support resolutions has been quite active, 
due to the well-behaved nature 
and variety of construction methods 
for these topological spaces. 
For example, Diana Taylor's resolution 
\cite{Taylor1966} may be viewed as a 
resolution supported on the full simplex whose 
vertices are labeled with the 
minimal generators of $N$. Bayer, Peeva, and Sturmfels 
\cite{BayerPeevaSturmfels1995} 
take an approach which resolves a specific class of 
ideals using a canonical subcomplex of the same full 
simplex. More generally, a criterion for using regular 
CW-complexes to support resolutions is developed in 
\cite{BayerSturmfels1998}. More recently, these 
techniques were connected to discrete Morse theory in 
\cite{BatziesWelker2002}, where a topological approach 
for reducing the length of regular CW resolutions is 
discussed. Furthermore, the individual techniques of 
Visscher \cite{Visscher2006}, 
Sinefakopolous \cite{Sinefakopolous2008}, and 
Mermin \cite{Mermin2010} each use the framework of regular 
CW-complexes to describe resolutions of individual 
classes of ideals. Despite the richness of these results, 
Velasco \cite{Velasco2008} showed that there exist minimal 
free resolutions of monomial ideals 
which cannot be supported on any CW-complex. Velasco's result 
makes it clear that the structural framework provided by 
CW-complexes is too restrictive to encompass the entire spectrum 
of structures needed for supporting minimal free resolutions 
of monomial ideals, and thus a more 
flexible structural framework is needed. 

The main goal of this paper is to provide an additional 
argument that the framework based on posets introduced 
in \cite{Clark2010} (and refered to as the \emph{poset construction}) 
provides the necessary flexibility. Our main 
result, Theorem~\ref{ChainComplexIso}, is that the poset 
construction, when applied to the face poset of a regular 
CW-complex $X$ recovers the cellular chain complex of $X$. 
As demonstrated in \cite{Clark2010} this provides a very useful 
criterion for checking if a given monomial 
ideal is supported on a regular CW-complex.   
The final evidence that the poset construction is the 
right tool to study the structure of minimal free 
resolutions of monomial ideals is given in \cite{ClarkTchernevMinSupp}, 
where it is shown that for every monomial ideal $I$ 
there is a poset such that our poset construction applied 
to that poset supports the minimal free resoluton of $I$.  

The paper is organized as follows. 
In Section~\ref{S:TwoComplexes} we extend the connection 
between the combinatorics and topology of regular 
CW-complexes into the category of complexes of vector 
spaces. The underlying combinatorial structure of 
regular CW-complexes was established by 
Bj\"orner \cite{Bjorner1984} as a way to classify 
those spaces whose poset of cellular incidences 
reflects the topology of the space itself. 
The first evidence that extending 
Bj\"orner's correspondence will  be very useful was 
described by the first author in \cite{Clark2012}. 
Mermin's result is recovered there, using poset 
combinatorics and the correspondence of 
Theorem~\ref{ChainComplexIso}. 
Each of the constructions mentioned above 
\cite{Taylor1966,BayerPeevaSturmfels1995,BayerSturmfels1998,
BatziesWelker2002,Visscher2006,Sinefakopolous2008,Mermin2010,OkazakiYanagawa} 
describe a topological structure for a resolution without 
explicitly using the combinatorial interpretation 
described in this paper. 

In Section~\ref{S:MonomialRes}, we use 
Theorem~\ref{ChainComplexIso} to reinterpret a variety of 
constructions for free resolutions of monomial ideals. 
There, we realize any cellular resolution as a CW-poset 
resolution. Lastly, we show that the general framework 
of poset resolutions serves as a common construction 
method for many classes of monomial ideals whose 
minimal resolutions have each been constructed using 
topological means. The value in using the combinatorial 
side of Bj\"orner's correspondence lies in the ability 
to use CW-posets to classify ideals which admit 
cellular (minimal) resolutions. Indeed, determining 
whether a resolution is supported on a regular CW-complex 
now amounts to determining whether 
the resolution is supported on a CW-poset. 
More generally, the result of Velasco makes clear the 
limitations of a purely topological perspective when 
constructing minimal resolutions. 
As such, we propose a shift in focus toward the more 
general notions of poset combinatorics when constructing 
resolutions, since many of the benefits of 
a topological approach appear naturally in the poset 
construction of \cite{Clark2010}. 

Throughout the paper, the notions of a poset, 
its order complex, and that of an algebraic chain complex are 
assumed to be familiar to the reader. 
When describing a topological property of a poset, we 
are implicitly describing the property of the order 
complex of the poset. 
For a poset $P$, we write $\Delta(P)$ for its order 
complex. When $X$ 
is a regular CW-complex, we write $P_X$ for its face poset. 


\mysection
{The complexes $\mathcal{C}(X)$ and $\mathcal{D}(P_X)$}
\label{S:TwoComplexes}

Recall that a CW-complex $X$ is said to be \emph{regular} 
if the attaching maps 
which define the incidence structure of $X$ are 
homeomorphisms.  The class of 
regular CW-complexes is studied by 
Bj\"orner in \cite{Bjorner1984}, who 
introduces and investigates the following related class of 
posets. 

\begin{definition}
A poset $P$ is called a \emph{CW-poset} if
\begin{enumerate}
\item $P$ has a least element $\hat{0}$,
\item $P$ is nontrivial (has more than one element),
\item For all $x\in{P}\smsm\{\hat{0}\}$, the open 
      interval $(\hat{0},x)$ is homeomorphic to a sphere.
\end{enumerate}
\end{definition}

Bj\"orner gives the following characterization of CW-posets, 
providing a connection between poset combinatorics and the 
class of regular CW-complexes. 

\begin{proposition}\cite[Proposition 3.1]{Bjorner1984}
A poset $P$ is a CW-poset if and only if 
it is isomorphic to the face poset of a regular CW-complex.
\end{proposition}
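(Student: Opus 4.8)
The plan is to establish the two implications separately; the easy direction reads the CW-poset axioms off the cell structure, and the substantive direction builds a regular CW-complex from a CW-poset by attaching cells recursively. Throughout, the face poset $P_X$ of a regular CW-complex $X$ is understood to carry a formally adjoined least element $\hat{0}$, so that $P_X\smsm\{\hat 0\}$ is the set of cells ordered by $\sigma\le\tau$ iff $\sigma\subseteq\overline{\tau}$; recall the standard facts that the order complex $\Delta(P_X\smsm\{\hat 0\})$ is the barycentric subdivision of $X$ (hence homeomorphic to $X$), and that the down-closed subsets of $P_X\smsm\{\hat 0\}$ are precisely the sets of cells of the subcomplexes of $X$.

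For the implication ``$P$ a face poset $\Rightarrow$ $P$ a CW-poset'', suppose $P\cong P_X$. Conditions (1) and (2) hold by construction ($\hat 0$ was adjoined, and $X$ has at least one cell). For (3), fix a $d$-cell $\sigma$; the open interval $(\hat 0,\sigma)$ consists of the proper nonempty faces of $\sigma$, i.e.\ the cells of the regular CW-complex $\partial\overline{\sigma}\cong S^{d-1}$, so $\Delta(\hat 0,\sigma)$ is the barycentric subdivision of $\partial\overline{\sigma}$ and is therefore homeomorphic to $S^{d-1}$ (with the convention $S^{-1}=\varnothing$ covering the case where $\sigma$ is a vertex). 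Thus $P$ is a CW-poset. I would present the two facts used here --- that $\partial\overline{\sigma}$ is a regular CW-complex with the stated cells, and that subdivision preserves homeomorphism type --- as recalled preliminaries rather than reprove them.

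For the converse, ``$P$ a CW-poset $\Rightarrow$ $P$ a face poset'', I would induct on $|P|$. Let $M$ be the set of maximal elements of $P$. If $P\smsm M=\{\hat 0\}$, then every element of $P\smsm\{\hat 0\}$ is an atom and $X$ may be taken to be the disjoint union of one point for each atom; this is the base case. Otherwise $P\smsm M$ is again a CW-poset: it retains $\hat 0$, it is nontrivial, and for every $y\in P\smsm M$ the open interval $(\hat 0,y)$ is unchanged, since it involves only elements strictly below $y$, none of which is maximal. By induction there is a regular CW-complex $X'$ with $P_{X'}\cong P\smsm M$. Now for each $x\in M$, the open interval $(\hat 0,x)$ sits inside $P_{X'}\smsm\{\hat 0\}$ as a down-closed subset (if $\hat 0<z<y<x$ then $\hat 0<z<x$), hence corresponds to a subcomplex $X'_x\subseteq X'$ homeomorphic to $\Delta(\hat 0,x)$, which by hypothesis is a sphere $S^{d-1}$ of a well-defined dimension $d=d_x\ge 0$. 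Attach a $d_x$-cell to $X'$ along a homeomorphism $S^{d_x-1}\to X'_x$ (the empty map when $(\hat 0,x)=\varnothing$); doing this for every $x\in M$ yields a CW-complex $X$. Because each attaching map is a homeomorphism onto a subcomplex, $X$ is again regular, the closed cell for $x$ is $X'_x$ together with the new open cell, and distinct new cells are incomparable. A direct comparison of incidences then gives $P_X\cong P$: over $P_{X'}\cong P\smsm M$ nothing changes, and the new cell for $x$ lies above exactly the elements of $\{\hat 0\}\cup(\hat 0,x)$, matching the position of $x$ in $P$.

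I expect the main obstacle to be marshalling the topological preliminaries cleanly rather than any single deep step: one needs that down-closed sets of cells in a regular CW-complex are precisely its subcomplexes, that gluing a cell along a homeomorphism onto such a subcomplex keeps the complex regular (equivalently, that the resulting characteristic map is an embedding), and that the face poset of the enlarged complex has exactly the prescribed covering relations. A pleasant feature of this route is that one never has to prove separately that $P$ is graded --- the rank of $x$ materializes as $d_x+1$ from the recursion --- though isolating gradedness first is an alternative if it shortens the write-up.
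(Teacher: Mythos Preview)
The paper does not give its own proof of this proposition; it is quoted verbatim as \cite[Proposition~3.1]{Bjorner1984} and used as a black box, so there is nothing in the present paper to compare your argument against. That said, your proposal is sound and is essentially Bj\"orner's original argument: read off the sphericity of lower intervals from the regularity of the boundary spheres in one direction, and in the other direction recursively attach a cell for each element along the subcomplex realized by its open lower interval. Your induction is on $|P|$, which tacitly assumes $P$ is finite; this is harmless in the context of the paper (all CW-complexes here are finite), but if you want the statement in Bj\"orner's generality you should phrase the construction as a recursion over the skeleta, attaching all elements of a given rank at once. The only place worth tightening is the sentence ``a direct comparison of incidences then gives $P_X\cong P$'': make explicit that the closure of the new cell for $x$ is $X'_x$ together with the open cell, hence its proper faces are exactly the cells indexed by $(\hat 0,x)$, and that the new cells are pairwise incomparable because their closures meet only inside $X'$.
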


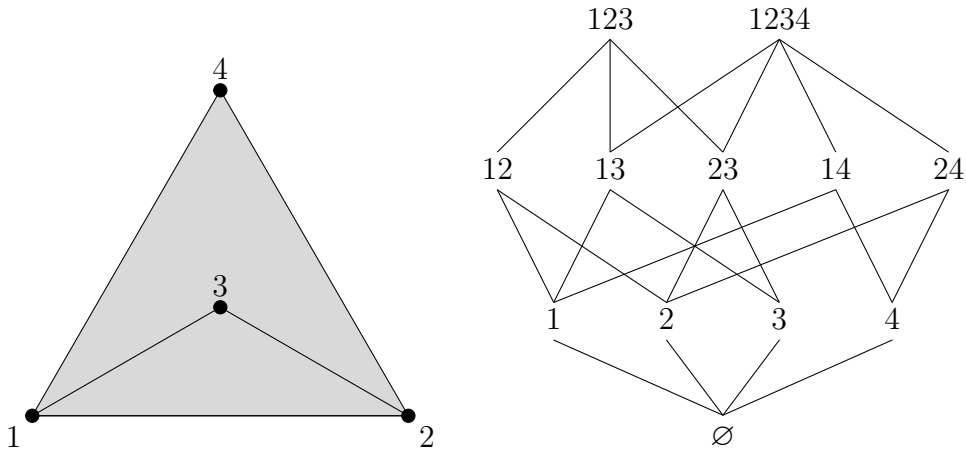
\begin{figure}
\begin{tabular}{cc}
\begin{tikzpicture}[scale=2.5]
\filldraw[fill=white!85!black, draw=black, thin] (0:0) -- (0:2) -- (60:2) -- (0:0) -- cycle;
\draw[draw=black, thin] (0:0) -- (0:2) -- (30:1.1547) -- (0:0) -- cycle;
\draw[fill, black] (0,0) circle (1pt) node[below left, black]{$1$};
\draw[fill, black] (0:2) circle (1pt) node[below right, black]{$2$};
\draw[fill, black] (30:1.1547) circle (1pt) node[above, black]{$3$};
\draw[fill, black] (60:2) circle (1pt) node[above, black]{$4$};
\end{tikzpicture}
& 
\begin{tikzpicture}[scale=0.25]
\draw[fill] (0,-4) circle (0pt) node[below] {$\varnothing$};
\draw[fill] (-9,0) circle (0pt) node[above] {$1$};
\draw[fill] (-3,0) circle (0pt) node[above] {$2$};
\draw[fill] (3,0) circle (0pt) node[above] {$3$};
\draw[fill] (9,0) circle (0pt) node[above] {$4$};
\draw[fill] (-12,8) circle (0pt) node[above] {$12$};
\draw[fill] (-6,8) circle (0pt) node[above] {$13$};
\draw[fill] (0,8) circle (0pt) node[above] {$23$};
\draw[fill] (6,8) circle (0pt) node[above] {$14$};
\draw[fill] (12,8) circle (0pt) node[above] {$24$};
\draw[fill] (-6,16) circle (0pt) node[above] {$123$};
\draw[fill] (3,16) circle (0pt) node[above] {$1234$};
\draw (3,16) -- (-6,10);
\draw (3,16) -- (0,10);
\draw (3,16) -- (6,10);
\draw (3,16) -- (12,10);
\draw (-6,16) -- (-12,10);
\draw (-6,16) -- (-6,10);
\draw (-6,16) -- (0,10);
\draw (-12,8) -- (-9,2);
\draw (-12,8) -- (-3,2);
\draw (-6,8) -- (-9,2);
\draw (-6,8) -- (3,2);
\draw (0,8) -- (-3,2);
\draw (0,8) -- (3,2);
\draw (6,8) -- (-9,2);
\draw (6,8) -- (9,2);
\draw (12,8) -- (9,2);
\draw (12,8) -- (-3,2);
\draw (0,-4) -- (-9,0); 
\draw (0,-4) -- (-3,0); 
\draw (0,-4) -- (3,0); 
\draw (0,-4) -- (9,0); 
\end{tikzpicture}
\end{tabular}
\caption{A two-dimensional regular CW-complex and the Hasse diagram of its face poset}\label{CWPic}
\end{figure}

\begin{remark}
Recall that a ranked poset $P$ has the property that 
for every $x\in P$, all maximal chains with $x$ as 
greatest element have the same finite length. 
Ranked posets admit a rank function and in the case 
of a CW-poset, the rank function $\rk:P_X\ra \mathbb{Z}$ 
takes the form $\rk(\sigma)=\dim(\sigma)+1$. 
Note that the $i$-dimensional cells of a regular 
CW-complex $X$ correspond bijectively to poset elements 
of rank $i+1$ in its face poset $P_X$. 
A two-dimensional example of this correspondence 
is given in Figure \ref{CWPic}. 
\end{remark}

\begin{remark}
Bj\"orner's result arises from the natural (see Lundell and 
Weingram \cite{LundellWeingram1969}) CW-isomorphism 
(we will refer to it in the sequel as 
\emph{Bj\"orner's correspondence}) between a regular 
CW-complex $X$ with collection of closed cells $\{\sigma\}$ 
and the regular CW-complex $\Delta(P_X)$ whose set of closed 
cells is $\{\Delta(\hat{0},\sigma]\}$. Alternately, the space 
$\Delta(P_X)$ may be viewed as a simplicial complex 
whose vertices are indexed by the cells of $X$. 
In fact, $\Delta(P_X)$ is the abstract barycentric 
subdivision of $X$. Furthermore, the simplicial and 
cellular homology groups of $\Delta(P_X)$ (and those of 
subcomplexes corresponding to open intervals of the form 
$(\hat{0},\sigma)$) are isomorphic by applying the 
Acyclic Carrier Theorem (c.f. Theorem 13.4 of 
\cite{Munkres1984}) to the subdivision map $X\ra\Delta(P_X)$. 
\end{remark}

We now describe the algebraic objects that provide 
the setting for the results of this paper. 
Throughout, $X$ will denote a a non-empty, 
finite-dimensional regular CW-complex. 
As is standard, we write $X^i$ for the collection 
of cells of $X$ of dimension $\le i$  
(the $i$-skeleton) and $\mathcal{C}(X)$ for the 
cellular chain complex of $X$ 
with coefficients in the field $\Bbbk$. Precisely, 
$$
\mathcal{C}(X):
0\ra
\mathcal{C}_p
\cdots\ra
\mathcal{C}_i\stackrel{\partial_i}{\lra}
\mathcal{C}_{i-1}\ra\cdots\ra
\mathcal{C}_1\stackrel{\partial_1}{\lra}
\mathcal{C}_0\stackrel{\partial_0}{\lra}
\mathcal{C}_{-1}
$$
with $\mathcal{C}_i=H_i(X^i,X^{i-1},\Bbbk)$, the relative 
homology group in dimension $i\ge 0$.  Since $X$ is nonempty, 
the $(-1)$-skeleton of $X$ consists only of the 
empty cell and by convention, 
we consider $\mathcal{C}_{-1}$ to be a 
one-dimensional vector space 
over $\Bbbk$. More generally, the basis 
elements of the vector spaces appearing 
in the chain complex $\mathcal{C}(X)$ are 
in one-to-one correspondence with the 
cells of $X$. Furthermore, when $\sigma$ 
is a cell of $X$, the differential of 
$\mathcal{C}(X)$ takes the form 
      $$
        \partial(\sigma)=
        \sum_{\substack{\tau\subset\sigma \\ \dim(\sigma)=\dim(\tau)+1}}
        c_{\sigma,\tau}\cdot \tau 
      $$
where the coefficients $c_{\sigma,\tau}\in\{-1,0,1\}$ 
are the so-called \emph{incidence numbers} and are 
determined by the chosen orientations of the 
cells of $X$. See Chapter IV of \cite{Massey1980} 
for details. 

Let $(P,<)$ be a poset with minimum element $\hat{0}$. 
For $p < q\in  P$ where there is no $r\in P$ 
such that $p < r < q$, we say that $q$ \emph{covers} $p$ 
and write $p\lessdot q$. We briefly recall  
the construction of $\mathcal{D}(P)$, a sequence of vector 
spaces and vector space maps, which owes its structure to 
the partial ordering and homology of open intervals in $P$; 
see \cite{Clark2010} for a full description of this 
\emph{poset construction}. In general, $\mathcal{D}(P)$ 
is not necessarily a complex of vector spaces, and in the 
case when $\mathcal{D}(P)$ is indeed is a complex, 
it need not be exact. For the remainder of the paper, 
we restrict discussion of the details 
of this construction as applied to the CW-poset $P_X$. 

The sequence 
$$
\mathcal{D}(P_X):
0\ra
\mathcal{D}_{p+1}\ra 
\cdots\ra
\mathcal{D}_i\stackrel{\varphi_i}{\lra}
\mathcal{D}_{i-1}\ra\cdots\ra
\mathcal{D}_1\stackrel{\varphi_1}{\lra}
\mathcal{D}_0
$$
is constructed using the homology of the order complexes 
of spherical open intervals of the form $(\hat{0},\alpha)$ 
in $P_X$. 
Let $\alpha$ be an element of the CW-poset $P_X$ and write 
$\Delta_\alpha$ as the order complex of the open interval 
$(\hat{0},\alpha)$.
For $i\ge 1$, the vector spaces in $\mathcal{D}(P_X)$ 
take the form 
$$
\mathcal{D}_i=
\bigoplus_{\alpha\in P_X}\redHo_{i-2}(\Delta_\alpha,\Bbbk)= 
\bigoplus_{\rk(\alpha) = i}\redHo_{i-2}(\Delta_\alpha,\Bbbk)
$$ 
(since each $\Delta_\alpha$ is a sphere of dimension 
$\rk(\alpha)-2$), 
and $\mathcal{D}_0=\redHo_{-1}(\{\varnothing\},\Bbbk)$, 
a one-dimensional $\Bbbk$-vector space. 

In order to describe the maps $\varphi_i$ for $i\ge 2$, write $\D_\lambda$ 
for the order complex of a half-closed interval $(\hat{0},\lambda]\subseteq P_X$ and take 
advantage of the decomposition 
$$\Delta_\alpha=\bigcup_{\lambda\ld\alpha}\D_\lambda.$$ 
Next, appeal to the Mayer-Vietoris sequence in reduced homology of the triple 
\begin{equation}\label{MVseq}
  \left(
  \D_\lambda,\hspace{.1in}
  \bigcup_{\underset{\lambda\ne\beta}{\overset{\beta\ld\alpha}{}}}
  \D_\beta,\hspace{.1in}
  \Delta_\alpha
  \right). 
\end{equation}
For notational simplicity when $\lambda\ld\alpha$, let 
$$
\Delta_{\alpha,\lambda}=
\D_{\lambda}\cap\left(\bigcup_{\underset{\lambda\ne\beta}{\overset{\beta\ld\alpha}{}}}
\D_\beta\right).
$$
Write 
$\iota:\Ho_{i-3}(\Delta_{\alpha,\lambda},\Bbbk)\ra\Ho_{i-3}(\Delta_\lambda,\Bbbk)$ 
for the map in homology induced by inclusion and
$$
\delta_{i-2}^{\alpha,\lambda}:
\Ho_{i-2}(\Delta_\alpha,\Bbbk)\ra
\Ho_{i-3}(\Delta_{\alpha,\lambda},\Bbbk)
$$ 
for the connecting homomorphism from the Mayer-Vietoris homology sequence 
of (\ref{MVseq}).  
The map $\varphi_i:\mathcal{D}_i\ra \mathcal{D}_{i-1}$ is defined componentwise as  
$$\varphi_{i}\vert_{\mathcal{D}_{i,\alpha}}=\sum_{\lambda\ld\alpha}\varphi_{i}^{\alpha,\lambda}$$ 
where $\varphi_{i}^{\alpha,\lambda}:\mathcal{D}_{i,\alpha}\ra{\mathcal{D}_{{i-1},\lambda}}$ 
is the composition $\varphi_{i}^{\alpha,\lambda}=\iota\circ\delta_{i-2}^{\alpha,\lambda}$.
In the border case, the map $\varphi_1:\mathcal{D}_1\ra{\mathcal{D}_0}$ is defined 
componentwise as  $$\varphi_1\vert_{\mathcal{D}_{1,\alpha}}=\id_{\Ho_{-1}(\{\varnothing\},\Bbbk)}.$$
We now state the main result of this paper. 

\begin{theorem}\label{ChainComplexIso}
Bj\"orner's correspondence induces a canonical 
isomorphism between the cellular chain complex \
$\mathcal{C}(X)$ and the sequence $\mathcal{D}(P_X)$. 
\end{theorem}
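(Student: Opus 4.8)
The plan is to exhibit the sequence $\mathcal D(P_X)$ as the cellular chain complex of a regular CW-complex $Y$ that Bj\"orner's correspondence identifies with $X$. Take $Y$ to be the barycentric subdivision of $X$ equipped with the regular CW-structure of the Remark, whose closed cells are the subcomplexes $\D_\alpha=\Delta(\hat 0,\alpha]$ for $\alpha\in P_X$; Bj\"orner's correspondence is a CW-isomorphism $Y\cong X$, giving a canonical identification $\mathcal C(Y)\cong\mathcal C(X)$ of cellular chain complexes. It then suffices to produce a canonical identification $\mathcal C(Y)\cong\mathcal D(P_X)$ --- which in particular forces $\mathcal D(P_X)$ to be a complex, something not evident from its construction. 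I would do this in two steps: match the graded vector spaces, then match the differentials.

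For the vector spaces: the open cell of $Y$ indexed by $\alpha$ has closure $\D_\alpha$, boundary $\partial\D_\alpha=\Delta(\hat 0,\alpha)=\Delta_\alpha$, and dimension $\rk(\alpha)-1$, so $\mathcal C_{i-1}(Y)=H_{i-1}(Y^{i-1},Y^{i-2};\Bbbk)=\bigoplus_{\rk(\alpha)=i}H_{i-1}(\D_\alpha,\Delta_\alpha;\Bbbk)$. Because $(\hat 0,\alpha]$ has a maximum, $\D_\alpha$ is the cone on $\Delta_\alpha$ with apex $\alpha$, hence contractible, so the connecting homomorphism of the pair is an isomorphism $H_{i-1}(\D_\alpha,\Delta_\alpha;\Bbbk)\xrightarrow{\ \cong\ }\redHo_{i-2}(\Delta_\alpha,\Bbbk)$; summing these gives a canonical isomorphism $\mathcal C_{i-1}(Y)\cong\bigoplus_{\rk(\alpha)=i}\redHo_{i-2}(\Delta_\alpha,\Bbbk)=\mathcal D_i(P_X)$ (so that $\mathcal C_{i-1}$ is matched with $\mathcal D_i$), with the border cases $\mathcal C_{-1}(Y)=\mathcal D_0$ immediate. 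Note that this isomorphism is built from Bj\"orner's homeomorphism and natural long exact sequences, and in particular does not depend on the orientations of the cells of $X$.

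The substance is the differentials: one must check that under these identifications the cellular boundary $\partial_{i-1}$ of $Y$ coincides with $\varphi_i$, and it is enough to do this one summand at a time. Fix $\lambda\ld\alpha$ with $\rk(\alpha)=i$, and set $W:=\bigcup_{\beta\ld\alpha,\ \beta\ne\lambda}\D_\beta$, so $\Delta_\alpha=\D_\lambda\cup W$ with $\D_\lambda\cap W=\Delta_{\alpha,\lambda}$. Since $\lambda$ is maximal in $(\hat 0,\alpha)$, any chain lying in both $\D_\lambda$ and some $\D_\beta$ with $\beta\ne\lambda$ lies in $(\hat 0,\lambda)$, whence $\Delta_{\alpha,\lambda}\subseteq\Delta_\lambda=\partial\D_\lambda$; this legitimises $\iota$. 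I would then combine three standard facts. (i) The Mayer--Vietoris connecting map $\delta_{i-2}^{\alpha,\lambda}$ of $\Delta_\alpha=\D_\lambda\cup W$ is the composite $\redHo_{i-2}(\Delta_\alpha)\to\redHo_{i-2}(\Delta_\alpha,W)\xleftarrow{\ \cong\ }\redHo_{i-2}(\D_\lambda,\Delta_{\alpha,\lambda})\xrightarrow{\ \partial\ }\redHo_{i-3}(\Delta_{\alpha,\lambda})$, the middle isomorphism being excision. (ii) Naturality of the long exact sequence of pairs along $(\D_\lambda,\Delta_{\alpha,\lambda})\hookrightarrow(\D_\lambda,\Delta_\lambda)$, together with contractibility of $\D_\lambda$, rewrites $\iota\circ\delta_{i-2}^{\alpha,\lambda}$ via the isomorphism $\partial\colon H_{i-2}(\D_\lambda,\Delta_\lambda)\xrightarrow{\ \cong\ }\redHo_{i-3}(\Delta_\lambda)$. (iii) The standard description of the cellular boundary of $Y$ (see Chapter~IV of \cite{Massey1980}), restricted to the $\alpha$-summand and projected to the $\lambda$-summand, is the composite $\redHo_{i-2}(\Delta_\alpha)\to\redHo_{i-2}(Y^{i-2})\to H_{i-2}(Y^{i-2},Y^{i-2}\smsm\D_\lambda^\circ)\xleftarrow{\ \cong\ }H_{i-2}(\D_\lambda,\Delta_\lambda)$, where $\D_\lambda^\circ$ denotes the open cell. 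Facts (i)--(iii) are reconciled by the single commuting square of pairs $(\D_\lambda,\Delta_{\alpha,\lambda})\hookrightarrow(\Delta_\alpha,W)$ over $(\D_\lambda,\Delta_\lambda)\hookrightarrow(Y^{i-2},Y^{i-2}\smsm\D_\lambda^\circ)$, whose horizontal arrows are excisions; a routine diagram chase then identifies $\varphi_i^{\alpha,\lambda}=\iota\circ\delta_{i-2}^{\alpha,\lambda}$ with the $\alpha,\lambda$-component of $\partial_{i-1}$. The border map $\varphi_1$ is visibly the augmentation $\partial_0$. Wherever the argument needs to pass between the simplicial homology of $\Delta_\alpha$, $\Delta_\lambda$, $\Delta_{\alpha,\lambda}$ and the cellular data of $X$ itself, the Acyclic Carrier Theorem applied to the subdivision map (as recorded in the Remark) supplies the comparison.

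I expect the main obstacle to be exactly this reconciliation: matching the ``internal'' Mayer--Vietoris recipe for $\varphi_i$ --- which decomposes the single boundary sphere $\Delta_\alpha$ along the closed cells $\D_\lambda$ of its boundary --- with the ``global'' skeletal filtration $Y^{-1}\subseteq Y^0\subseteq\cdots$ defining $\mathcal C(Y)$. The bookkeeping is all naturality and excision, but one must verify carefully that $\iota\colon\redHo_{i-3}(\Delta_{\alpha,\lambda})\to\redHo_{i-3}(\Delta_\lambda)$ really plays the role of projecting onto the $\lambda$-summand of $Y^{i-2}/Y^{i-3}$, and that every connecting homomorphism invoked is an isomorphism for the stated reason, namely that the relevant $\D_\bullet$ are cones. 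Signs then take care of themselves, since reorienting a cell $\alpha$ simultaneously negates the basis vector of $\mathcal C_{\dim\alpha}(X)$ and the chosen generator of $\redHo_{\dim\alpha-1}(\Delta_\alpha)$; hence the isomorphism of complexes is canonical.
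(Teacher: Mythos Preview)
Your proof is correct and, at the level of the vector-space isomorphisms, essentially identical to the paper's: both identify $\mathcal C_{i-1}$ with $\mathcal D_i$ via the connecting map of the pair $(\D_\alpha,\Delta_\alpha)$ (the paper calls this ``reindexing'').

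The genuine difference is in how the differentials are matched. The paper isolates the bottom square of its diagram as a separate statement (Lemma~\ref{phiCD}), formulated for an arbitrary ranked poset, and proves it by an explicit chain-level computation: a relative cycle $w$ is decomposed as $\sum_\beta w_\beta$, each $v_\beta=d(w_\beta)$ is further split along a chosen partition $(\hat 0,\beta)=\bigsqcup_{\gamma\ld\beta}P_{\beta,\gamma}$, and the two routes are compared term by term. Your route instead stays inside the CW world and verifies the single component $\varphi_i^{\alpha,\lambda}$ against the $(\alpha,\lambda)$-entry of the cellular boundary of $Y$ by a diagram chase using excision and naturality of long exact sequences, the key input being the commuting square of pairs $(\D_\lambda,\Delta_{\alpha,\lambda})\hookrightarrow(\Delta_\alpha,W)$ over $(\D_\lambda,\Delta_\lambda)\hookrightarrow(Y^{i-2},Y^{i-2}\smsm\D_\lambda^\circ)$. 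Your argument is shorter and more conceptual for the CW case at hand; the paper's lemma is more hands-on but is stated in greater generality (any ranked poset, not just a CW-poset), which may be useful elsewhere. Either way the conclusion is the same.
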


\begin{proof}
Since all homology is taken with coefficients in 
the field $\Bbbk$, we will omit it from the notation. 
By definition of the cellular chain complex we have  
$\mathcal{C}_i = H_i(X^i,X^{i-1})$. 
Furthermore, 
$
H_i(X^i,X^{i-1})=
H_i\left(\widetilde{C}(X^i)/\widetilde{C}(X^{i-1})\right)
$ 
through the realization of the relative homology of 
$(X^i,X^{i-1})$ as the homology of the 
quotient of the cellular chain complex of $X^i$ relative 
to the cellular chain complex of $X^{i-1}$. 
The CW isomorphism between $(X,\{\sigma\})$ and 
$(P_X,\{(\hat{0},\sigma]\})$ induces the isomorphism 
$$
\redHo_i\left(\widetilde{C}(X^i)/\widetilde{C}(X^{i-1})\right)
\cong
\redHo_i\left(\bigoplus_{\rk(\alpha)=i} 
\widetilde{C}({\bf D}_\alpha)/\widetilde{C}(\Delta_\alpha)
\right). 
$$ 
The equality 
$$
\redHo_i\left(\bigoplus_{\rk(\alpha)=i} 
\widetilde{C}({\bf D}_\alpha)/\widetilde{C}(\Delta_\alpha)
\right)
=\bigoplus_{\rk(\alpha)=i} \redHo_i({\bf D}_\alpha,\Delta_\alpha)
$$  
realizes the relative homology of a direct sum of 
quotients of chain complexes as the appropriate 
direct sum of relative homology groups. 
Lastly, we have the isomorphism 
$$
\bigoplus_{\rk(\alpha)=i} \redHo_i({\bf D}_\alpha,\Delta_\alpha)
\cong
\bigoplus_{\rk(\alpha)=i}\redHo_{i-1}(\Delta_\alpha)
=\mathcal{D}_{i+1}, 
$$
which is given by the connecting map in the long exact 
sequence in relative homology 
and is referred to as \emph{reindexing} in the 
Appendix of \cite{Clark2010}. 

Thus, for every $i\ge 0$, we have the following sequence 
of canonical isomorphisms: 
\begin{eqnarray*}
\mathcal{C}_i & = & H_i(X^i,X^{i-1})\\
 & = & H_i\left(\widetilde{C}(X^i)/\widetilde{C}(X^{i-1})\right)\\
 & \cong & H_i\left(\bigoplus_{\rk(\alpha)=i} \widetilde{C}({\bf D}_\alpha)/\widetilde{C}(\Delta_\alpha)\right)\\
& \cong & \bigoplus_{\rk(\alpha)=i}  H_i\left(\widetilde{C}({\bf D}_\alpha)/\widetilde{C}(\Delta_\alpha)\right)\\
 & \cong & \bigoplus_{\rk(\alpha)=i} \redHo_i({\bf D}_\alpha,\Delta_\alpha)\\
 & \cong & \bigoplus_{\rk(\alpha)=i} \redHo_{i-1}(\Delta_\alpha) = \mathcal{D}_{i+1}.
\end{eqnarray*}
The vector spaces $\mathcal{C}_{-1}$ and $\mathcal{D}_0$ 
are each one-dimensional and 
are canonically isomorphic. Therefore, 
the isomorphism between the vector spaces appearing 
in the two chain complexes is established. 

It remains to show that this composition of 
isomorphisms commutes with the differentials 
of the sequences under consideration. Precisely, 
we must show that the following is a commutative diagram.  

\begin{equation}\label{CWposetCD}
\begin{CD} 
\mathcal{C}_i = \redHo_i(X^i,X^{i-1}) @>\partial_i>> \redHo_{i-1}(X^{i-1},X^{i-2})=\mathcal{C}_{i-1} \\
@VVV @VVV \\
\redHo_i\left(\widetilde{C}(X^i)/\widetilde{C}(X^{i-1})\right) 
@>\partial_i>> 
\redHo_{i-1}\left(\widetilde{C}(X^{i-1})/\widetilde{C}(X^{i-2})\right)\\
@VVV @VVV \\
\ds\redHo_i\left(\bigoplus_{\rk(\alpha)=i} \widetilde{C}({\bf D}_\alpha)/\widetilde{C}(\Delta_\alpha)\right) 
@>\partial_i>> 
\ds\redHo_{i-1}\left(\bigoplus_{\rk(\beta)=i-1} \widetilde{C}({\bf D}_\beta)/\widetilde{C}(\Delta_\beta)\right)\\
@VVV @VVV \\
\ds\bigoplus_{\rk(\alpha)=i}  H_i\left(\widetilde{C}({\bf D}_\alpha)/\widetilde{C}(\Delta_\alpha)\right) 
@>\sum_\alpha \partial_i^\alpha>> 
\ds\bigoplus_{\rk(\beta)=i-1}  H_{i-1}\left(\widetilde{C}({\bf D}_\beta)/\widetilde{C}(\Delta_\beta)\right) \\
@VVV @VVV \\
\ds\bigoplus_{\rk(\alpha)=i} \redHo_i({\bf D}_\alpha,\Delta_\alpha) 
@>\sum_\alpha \partial_i^\alpha>> 
\ds\bigoplus_{\rk(\beta)=i-1} \redHo_{i-1}({\bf D}_\beta,\Delta_\beta)\\
@V{\textup{reindex}}VV @VV{\textup{reindex}}V \\
\ds\mathcal{D}_{i+1}=\bigoplus_{\rk(\alpha)=i} \redHo_{i-1}(\Delta_\alpha) 
@>\varphi_{i+1}>> 
\ds\bigoplus_{\rk(\beta)=i-1} \redHo_{i-2}(\Delta_\beta)=\mathcal{D}_{i}\\
\end{CD}
\end{equation}
The commutativity of the bottom square above is immediate 
from Lemma \ref{phiCD}, which is a general fact about 
poset homology.  The commutativity of the remaining  
squares of (\ref{CWposetCD}) is a straightforward 
verification. 
\end{proof} 

Turning to the machinery necessary for Lemma \ref{phiCD}, 
let $j\ge 0$ and write 
$$
\Delta_{\alpha}^{(j)}
=\bigcup_{\substack{\gamma\le\alpha \\ \rk(\gamma)=\rk(\alpha)-j}}
{\bf D}_\gamma 
\quad=\quad \bigcup_{\gamma\ld_j\alpha}
{\bf D}_\gamma  
$$ 
where, to simplify notation, we write 
$\gamma\ld_j\alpha$ when 
$\gamma\le\alpha$ and $\rk(\gamma)=\rk(\alpha)-j$. 
Note that $\Delta_{\alpha}^{(1)}=\Delta_{\alpha}$.
Next, consider the long exact sequence in relative homology 
\begin{equation}\label{Quotients}
\cdots
\ra
\Ho_i(\Delta_{\alpha}^{(j)},\Delta_{\alpha}^{(j+2)},\Bbbk)
\stackrel{\mu}{\ra}
\Ho_i(\Delta_{\alpha}^{(j)},\Delta_{\alpha}^{(j+1)},\Bbbk)
\stackrel{D}{\ra}
\Ho_{i-1}(\Delta_{\alpha}^{(j+1)},\Delta_{\alpha}^{(j+2)},\Bbbk)
\ra
\cdots
\end{equation}
induced by the inclusions 
$$
\Delta_{\alpha}^{(j+2)}\subset 
\Delta_{\alpha}^{(j+1)}\subset \Delta_{\alpha}^{(j)}.
$$

\begin{lemma}\label{phiCD}
Let $P$ be a ranked poset. For each $i\ge 1$ and $\alpha\in P$ the diagram  
\begin{equation*}
\xymatrix{ 
\Ho_{i}(\Delta_{\alpha}^{(j)},\Delta_{\alpha}^{(j+1)},\Bbbk) 
\ar[r]^D \ar@{=}[d]_{\textup{reindex}}
& \Ho_{i-1}(\Delta_{\alpha}^{(j+1)},\Delta_{\alpha}^{(j+2)},\Bbbk) 
\ar@{=}[d]^{\textup{reindex}}\\
\ds\bigoplus_{\rk(\beta)=\rk(\alpha)-j}\Ho_{i-1}(\Delta_{\beta}^{(1)},\Bbbk) \ar[r]^{\varphi_{i+1}}
& \ds\bigoplus_{\rk(\gamma)=\rk(\alpha)-j-1}\Ho_{i-2}(\Delta_{\gamma}^{(1)},\Bbbk)}
\end{equation*}
is commutative. 
\end{lemma}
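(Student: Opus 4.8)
The plan is to reduce the lemma to the naturality of the Mayer--Vietoris connecting homomorphism together with the defining formula for $\varphi_{i+1}$. First I would unwind the two ``reindex'' vertical maps. The space $\Delta_\alpha^{(j)}/\Delta_\alpha^{(j+1)}$ is a wedge-type quotient: since $\Delta_\alpha^{(j)}=\bigcup_{\beta\ld_j\alpha}{\bf D}_\beta$ and $\Delta_\alpha^{(j+1)}$ contains all the pairwise intersections ${\bf D}_\beta\cap{\bf D}_{\beta'}$ for distinct $\beta,\beta'\ld_j\alpha$ (because ${\bf D}_\beta\cap{\bf D}_{\beta'}=\Delta(\hat 0,\beta]\cap\Delta(\hat 0,\beta']$ lies in the order complex of elements of rank $\le\rk(\alpha)-j-1$), the relative chain complex $\widetilde C(\Delta_\alpha^{(j)})/\widetilde C(\Delta_\alpha^{(j+1)})$ splits as $\bigoplus_{\beta\ld_j\alpha}\widetilde C({\bf D}_\beta)/\widetilde C(\Delta_\beta^{(1)}\cap\Delta_\alpha^{(j+1)})$, and one checks $\Delta_\beta^{(1)}\cap\Delta_\alpha^{(j+1)}=\Delta_\beta^{(1)}=\Delta_\beta$. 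Hence $\redHo_i(\Delta_\alpha^{(j)},\Delta_\alpha^{(j+1)})\cong\bigoplus_{\beta\ld_j\alpha}\redHo_i({\bf D}_\beta,\Delta_\beta)$, and then the reindexing isomorphism of \cite{Clark2010}, applied summandwise, carries this to $\bigoplus_{\beta\ld_j\alpha}\redHo_{i-1}(\Delta_\beta)$ via the connecting maps of the long exact sequences of the pairs $({\bf D}_\beta,\Delta_\beta)$ (which are contractible in the numerator, so the connecting map is an isomorphism). The same description applies one level down for $\gamma\ld_{j+1}\alpha$.

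Next I would identify the top horizontal map $D$ under these splittings. The key point is that the boundary map $D$ in the long exact sequence of the triple $(\Delta_\alpha^{(j)},\Delta_\alpha^{(j+1)},\Delta_\alpha^{(j+2)})$ is, componentwise on the $\beta$-summand, built from exactly the same inclusion-and-connecting data as the map $\delta_{i-2}^{\alpha,\lambda}$ and $\iota$ that define $\varphi_{i+1}^{\beta,\gamma}$ in the poset construction. Concretely, for fixed $\beta\ld_j\alpha$ the space $\Delta_\beta=\Delta_\beta^{(1)}$ decomposes as $\bigcup_{\gamma\ld\beta}{\bf D}_\gamma$, and the Mayer--Vietoris triple $({\bf D}_\gamma,\bigcup_{\gamma\ne\gamma'\ld\beta}{\bf D}_{\gamma'},\Delta_\beta)$ is precisely the instance of \eqref{MVseq} used to define $\varphi_{i+1}^{\beta,\gamma}=\iota\circ\delta_{i-2}^{\beta,\gamma}$. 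I would then invoke naturality of the Mayer--Vietoris/long-exact-sequence connecting homomorphism with respect to the maps of triples induced by the inclusions of the various $\Delta_\alpha^{(\bullet)}$ into one another, to see that $D$ restricted to the $\beta$-summand, followed by projection to the $\gamma$-summand, agrees up to the reindexing isomorphisms with $\varphi_{i+1}^{\beta,\gamma}$. Summing over $\gamma\ld_{j+1}\alpha$ (equivalently over $\gamma\ld\beta$ with $\gamma\le\alpha$) and then over $\beta\ld_j\alpha$ gives the claimed commutativity of the square.

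The main obstacle is bookkeeping, not conceptual: one must be careful that the reindexing isomorphism is genuinely compatible with passing between the ``big'' relative homology groups $\redHo_i(\Delta_\alpha^{(j)},\Delta_\alpha^{(j+1)})$ and the direct sum of the ``small'' ones $\redHo_i({\bf D}_\beta,\Delta_\beta)$ — i.e. that the splitting of the relative chain complex is respected by all the connecting homomorphisms in sight — and that the signs/orientations introduced by reindexing are consistent on both sides of the square. I would handle this by proving a single naturality statement: given an inclusion of triples of pairs coming from $\Delta_\alpha^{(j+2)}\subset\Delta_\alpha^{(j+1)}\subset\Delta_\alpha^{(j)}$, the connecting map $D$ commutes with the chain-level splittings, so that after applying reindexing to each summand the diagram commutes on the nose. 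Everything else is the routine verification already flagged in the proof of Theorem~\ref{ChainComplexIso} as ``a general fact about poset homology,'' and the argument above is exactly that fact.
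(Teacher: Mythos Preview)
Your outline takes a genuinely different route from the paper. The paper proceeds by an explicit chain-level computation: it picks a relative cycle $w=\sum_{\beta\ld_j\alpha} w_\beta$, reindexes each piece to the cycle $v_\beta=d(w_\beta)\in\widetilde C_{i-1}(\Delta_\beta)$, then chooses a partition $(\hat 0,\beta)=\bigsqcup_{\gamma\ld\beta}P_{\beta,\gamma}$ (with $\lambda\le\gamma$ for each $\lambda\in P_{\beta,\gamma}$) in order to split $v_\beta=\sum_{\gamma}w_{\beta,\gamma}$, and finally verifies directly that both the $\varphi_{i+1}$-path and the reindexed $D$-path land on $\sum_{\delta\ld\beta\ld_j\alpha}[d(w_{\beta,\delta})]$ in the $\delta$-component. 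No abstract naturality is invoked; the match is checked by hand on representatives.

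Your conceptual approach is appealing, but the step where you ``invoke naturality'' is precisely where the content of the lemma lives, and it is not a routine application. The map $D$ is the connecting homomorphism of the triple $(\Delta_\alpha^{(j)},\Delta_\alpha^{(j+1)},\Delta_\alpha^{(j+2)})$, whereas each $\varphi_{i+1}^{\beta,\gamma}=\iota\circ\delta^{\beta,\gamma}$ is built from the Mayer--Vietoris connecting map for the cover $\Delta_\beta=\bigcup_{\gamma\ld\beta}{\bf D}_\gamma$ followed by an inclusion-induced map. These two connecting homomorphisms come from different short exact sequences of chain complexes, and there is no single morphism of triples whose naturality square immediately identifies them after reindexing. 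To close your argument you would need a comparison statement---for instance, that the Mayer--Vietoris boundary for this particular cover, composed with $\iota$ and the reindexing on the $\gamma$-side, agrees with the $\gamma$-component of $D$ composed with reindexing on the $\beta$-side---and establishing that amounts to the same chain-level verification the paper carries out (both connecting maps reduce to the simplicial boundary $d$ on representatives, but one must still track which summand each term lands in). So your plan is not wrong, but calling the crux ``bookkeeping'' undersells it: that identification \emph{is} the lemma, and the paper's partition device is exactly what makes it go through cleanly.
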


\begin{proof}
Orient each face $\sigma=\{a^\sigma_0,\cdots,a^\sigma_i\}\in \Delta_\alpha^{(j)}$ using 
the ordering in the chain $a^\sigma_0<\cdots<a^\sigma_i\in P$. 
Next, suppose that $[\bar{w}]$ is a representative 
for the homology class generated by the image 
$\bar{w}\in\widetilde{\mathcal C}_i\left(\Delta_\alpha^{(j)},\Delta_\alpha^{(j+1)}\right)$
of the relative cycle
$$
w
=\sum_{a_i^\sigma\ld_j\alpha} c_\sigma\cdot\sigma
=\sum_{\beta\ld_j\alpha} w_\beta
$$ 
of $\left(\Delta_\alpha^{(j)},\Delta_\alpha^{(j+1)}\right)$,
where $c_\sigma\in\Bbbk$, 
and $\ds w_\beta=\sum_{a_i^\sigma=\beta}c_\sigma\cdot\sigma$. 
Since $w$ is a relative cycle, we must have 
$$
\sum_{a_i^\sigma=\beta} c_\sigma\cdot d(\hat{\sigma})=0
$$ 
where $\hat{\sigma}=\sigma\smsm\{a^\sigma_i\}=\{a^\sigma_0,\cdots,a^\sigma_{i-1}\}$
and $d$ is the simplicial boundary map.
Therefore, each $w_\beta$ is a relative cycle for 
$(\Delta_\beta^{(j)},\Delta_\beta^{(j+1)})$
and reindexing the class 
$
\ds[\bar{w}]=\sum_{\beta)\ld_j\alpha}[\bar{w}_\beta]
$ 
yields  
$$
\sum_{\beta\ld_j\alpha}[d(w_\beta)]
=\sum_{\beta\ld_j\alpha}[v_\beta], $$
where 
$$
v_\beta
=d(w_\beta)
=(-1)^i\sum_{a_i^\sigma=\beta}c_\sigma\cdot\hat{\sigma}
$$
is a cycle in $\Delta_\beta^{(1)}$.
Next, choose for each $\beta\le\alpha$ with 
$\rk\beta=\rk\alpha - j$, a partition 
$$
\left(\hat{0},\beta\right)
=\bigsqcup_{\gamma\ld\beta}P_{\beta,\gamma}
$$
with the property that $\lambda\le\gamma$ for each 
$\lambda\in P_{\beta,\gamma}$. This choice allows us to write 
$$
v_\beta
=
\sum_{\gamma\ld\beta}
w_{\beta,\gamma}
\qquad\text{where}\qquad  
w_{\beta,\gamma}=
(-1)^i\sum_{
\begin{smallmatrix}
 a_i^\sigma=\beta \\ 
 a_{i-1}^\sigma\in P_{\beta,\gamma}
\end{smallmatrix} 
}
 c_\sigma\cdot\hat{\sigma}.
$$
Therefore, for each $\delta<\alpha$ with 
$\rk(\delta)=\rk(\alpha)-j-1$
the component of 
$$
\varphi_{i+1}\left(
\sum_{
\beta\ld_j\alpha 
}
[v_\beta]\right)
$$
in $\Ho_{i-2}(\Delta_{\delta}^{(1)},\Bbbk)$ is given by 
$$
\sum_{\delta\ld\beta\ld_j\alpha} 
   \varphi_{i+1}^{\beta,\delta}\left([v_\beta]\right)=
\sum_{\delta\ld\beta\ld_j\alpha}
   \varphi_{i+1}^{\beta,\delta}
\left(\left[\sum_{\gamma\ld\beta}w_{\beta,\gamma}\right]\right)=
\sum_{\delta\ld\beta\ld_j\alpha}
   \left[d(w_{\beta,\delta})\right].  
$$
On the other hand, since $v_\beta$ is a cycle, 
each $w_{\beta,\gamma}$ is a relative cycle for 
$(\Delta_\alpha^{(j+1)},\Delta_\alpha^{(j+2)})$.
As $D$ is the connecting map in a long exact sequence in 
homology, we have
$$
D([\overline{w}]) 
= 
\left[\overline{d(w)}\right] 
= 
\left[\sum_{
\beta\ld_j\alpha 
}
\bar{v}_\beta\right] 
= 
\sum_{
\beta\ld_j\alpha 
}
\left[
\sum_{\gamma\ld\beta}\overline{w}_{\beta,\gamma}\right], 
$$
which upon reindexing becomes 
$$
\sum_{
\gamma\ld_{j+1}\alpha 
}
\left[d\left(
\sum_{\gamma\ld\beta\ld_j\alpha}
    w_{\beta,\gamma}\right)\right]. 
$$
Therefore, for $\delta < \alpha$ with 
$\rk(\delta)=\rk(\alpha)-j-1$, 
the image in $\Ho_{i-2}(\Delta_{\delta}^{(1)},\Bbbk)$ 
is equal to 
$$
\left[d\left(
\sum_{
\delta\ld\beta\ld_j\alpha  
}
w_{\beta,\delta}\right)\right] 
=\sum_{\delta\ld\beta\ld_j\alpha}
[d(w_{\beta,\delta})],
$$
the desired conclusion. 
\end{proof} 

\mysection
{Free resolutions of monomial ideals}\label{S:MonomialRes}

Let $R=\pring$, considered with its usual $\mathbb{Z}^n$-grading (multigrading) 
and suppose that $N$ is a monomial ideal in $R$. 
A CW-complex $X$ on $r$ vertices inherits a $\mathbb{Z}^n$-grading 
from the $r$ minimal generators of $N$ through the following correspondence. 
Let $\sigma$ be a non-empty cell of $X$, and identify $\sigma$ by its set of 
vertices $V_\sigma$. Set $m_\sigma=\lcm ( m_j \,\vert\, j\in V_\sigma)$, 
so that the multidegree of $\sigma$ is defined as the multidegree of the 
monomial $m_\sigma$. Clearly, the multigrading of $X$ induces 
a multigrading on the face poset $P_X$. 

When $X$ is $\mathbb{Z}^n$-graded, the cellular chain complex 
$\mathcal{C}(X)$ is homogenized in the usual way 
to produce a $\mathbb{Z}^n$-graded chain 
complex of $R$-modules. Precisely, for any cell $\sigma\in X$, 
let $R\sigma$ be a rank one free $R$-module which has 
multidgree $m_\sigma$. The complex $\mathcal{F}_X$ is the 
$\mathbb{Z}^n$-graded $R$-module 
$\ds\bigoplus_{\varnothing\ne\sigma\in X} R\sigma$ 
with differential 
$$   
        \partial(\sigma)=
        \sum_{\substack{\tau\subset\sigma \\ \dim(\sigma)=\dim(\tau)+1}}
        c_{\sigma,\tau}\,\frac{m_\sigma}{m_\tau} \, \tau. 
$$

For $\mathbf{b}\in\mathbb{Z}^n$, write $X_{\preceq \mathbf{b}}$ for the collection of cells in $X$ 
whose multidegrees are comparable to $\mathbf{b}$. Bayer and Sturmfels give the 
following characterization of CW-complexes which support free resolutions. 

\begin{proposition}\cite[Proposition 1.2]{BayerSturmfels1998}
The complex $\mathcal{F}_X$ is a free resolution of $N$ 
if and only if $X_{\preceq \mathbf{b}}$ is acyclic over $\Bbbk$ for 
all multidegrees $\mathbf{b}$. 
\end{proposition}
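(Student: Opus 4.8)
The plan is to exploit the $\mathbb{Z}^n$-grading and test exactness of $\mathcal{F}_X$ one multidegree at a time. First I would augment $\mathcal{F}_X$ by the homogeneous $R$-linear map $\epsilon\colon(\mathcal{F}_X)_0\to N$ sending the generator of $R\sigma$ to $m_\sigma$ for each vertex $\sigma$ of $X$; then $\mathcal{F}_X$ is a free resolution of $N$ exactly when the augmented complex $\cdots\to(\mathcal{F}_X)_1\to(\mathcal{F}_X)_0\xrightarrow{\,\epsilon\,}N\to0$ is exact. Since every module and map here is $\mathbb{Z}^n$-graded, this is equivalent to exactness of the degree $\mathbf{b}$ strand --- a complex of finite-dimensional $\Bbbk$-vector spaces --- for every $\mathbf{b}\in\mathbb{Z}^n$.

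The heart of the argument is to read off that strand. For a cell $\sigma$, the free module $R\sigma$ has one-dimensional degree $\mathbf{b}$ component, spanned by $(x^{\mathbf{b}}/m_\sigma)\,\sigma$, precisely when $m_\sigma$ divides $x^{\mathbf{b}}$, i.e.\ when $\mdeg(\sigma)\preceq\mathbf{b}$; otherwise the component is zero. Hence $(\mathcal{F}_X)_{\mathbf{b}}$ has $\Bbbk$-basis indexed by the nonempty cells of $X_{\preceq\mathbf{b}}$, and $X_{\preceq\mathbf{b}}$ really is a subcomplex of $X$, since $\tau\subseteq\sigma$ forces $m_\tau\mid m_\sigma$. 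Writing $e_\sigma=(x^{\mathbf{b}}/m_\sigma)\,\sigma$ and substituting into the differential of $\mathcal{F}_X$ gives $\partial(e_\sigma)=\sum_{\tau}c_{\sigma,\tau}\,e_\tau$, which under the identification $e_\sigma\leftrightarrow\sigma$ is exactly the cellular boundary map of $X_{\preceq\mathbf{b}}$; likewise $\epsilon(e_\sigma)=x^{\mathbf{b}}$ for every vertex $\sigma$, which is the simplicial augmentation onto $N_{\mathbf{b}}$. So the degree $\mathbf{b}$ strand of the augmented complex is canonically the (unreduced) cellular chain complex $\mathcal{C}(X_{\preceq\mathbf{b}})$ together with its augmentation map onto $N_{\mathbf{b}}$.

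I would then finish by cases on whether $x^{\mathbf{b}}\in N$. If $x^{\mathbf{b}}\notin N$ then $N_{\mathbf{b}}=0$ and no minimal generator divides $x^{\mathbf{b}}$, so $X_{\preceq\mathbf{b}}$ has no vertices; it is the empty complex, the strand vanishes, and exactness holds trivially (consistently with declaring the empty complex acyclic over $\Bbbk$). If $x^{\mathbf{b}}\in N$ then $N_{\mathbf{b}}\cong\Bbbk$ and $X_{\preceq\mathbf{b}}$ is nonempty, so identifying $N_{\mathbf{b}}$ with the degree $-1$ term of the reduced chain complex turns the degree $\mathbf{b}$ strand of the augmented complex into the reduced cellular chain complex $\widetilde{\mathcal{C}}(X_{\preceq\mathbf{b}};\Bbbk)$. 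Exactness of this complex is precisely the vanishing $\widetilde{H}_i(X_{\preceq\mathbf{b}};\Bbbk)=0$ for all $i$, that is, the acyclicity of $X_{\preceq\mathbf{b}}$ over $\Bbbk$. Quantifying over all $\mathbf{b}$ gives the stated equivalence.

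Almost everything here is routine homological bookkeeping; the one place needing genuine care is the middle step --- verifying that in the basis $\{e_\sigma\}$ the homogenized differential reproduces the cellular incidence numbers $c_{\sigma,\tau}$ with their correct signs (so that the strand is the cellular chain complex on the nose, not merely up to rescaling), and that $\epsilon$ matches the simplicial augmentation. The only other subtlety is the convention that the empty complex counts as acyclic, which is harmless since it is invoked only for those $\mathbf{b}$ with $x^{\mathbf{b}}\notin N$, where exactness of the strand is automatic. I do not anticipate any real obstacle beyond these points.
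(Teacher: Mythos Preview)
Your argument is correct and is precisely the standard proof of this criterion. Note, however, that the paper does not supply its own proof of this proposition: it is quoted verbatim as \cite[Proposition~1.2]{BayerSturmfels1998} and used as background. There is therefore nothing in the present paper to compare your proof against; what you have written is essentially the original Bayer--Sturmfels argument, namely the observation that the multidegree~$\mathbf{b}$ strand of the homogenized complex $\mathcal{F}_X$ is canonically the (augmented) cellular chain complex of the subcomplex $X_{\preceq\mathbf{b}}$, so that graded exactness is equivalent to acyclicity of each $X_{\preceq\mathbf{b}}$ over~$\Bbbk$. Your bookkeeping in the basis $e_\sigma=(x^{\mathbf{b}}/m_\sigma)\,\sigma$ is accurate, and the two border cases ($x^{\mathbf{b}}\notin N$ and $x^{\mathbf{b}}\in N$) are handled correctly.
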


We now give details of the homogenization of the sequence $\mathcal{D}(P_X)$.  
First, consider the map $\eta:P_X\lra\mbb{Z}^n$ for the map 
induced by the multigrading on $X$. Next, define 
$$
\mathcal{F}(\eta):
  \cdots
  \lra 
  \mathcal{F}_t
  \stackrel{\partial_t}{\lra} 
  \mathcal{F}_{t-1}
  \lra
  \cdots
  \lra
  \mathcal{F}_1
  \stackrel{\partial_1}{\lra} 
  \mathcal{F}_0,
$$
a sequence of free multigraded 
$R$-modules and multigraded $R$-module homomorphisms. 
For $i\ge{1}$, we have 
$$
\ds \mathcal{F}_i=
\bigoplus_{{\hat{0}}\ne\lambda\in P}\mathcal{F}_{i,\lambda}=
\bigoplus_{{\hat{0}}\ne\lambda\in P}R\otimes_{\Bbbk}\mathcal{D}_{i,\lambda}
$$ 
where the multigrading of $x^\bfa\otimes{v}$ 
is defined as $\bfa+\eta(\lambda)$ for any element  
$v\in \mathcal{D}_{i,\lambda}=\redHo_{i-2}(\Delta_\lambda,\Bbbk)$.  

The differential $\partial_i:\mathcal{F}_i\lra \mathcal{F}_{i-1}$ 
in this sequence of multigraded modules is defined on the 
component $\mathcal{F}_{i,\alpha}$ as 
$$
\ds \partial_i\arrowvert_{\mathcal{F}_{i,\alpha}}
=\sum_{\lambda\ld\alpha}\partial^{\alpha,\lambda}_i
$$ 
where 
$\partial^{\alpha,\lambda}_i:\mathcal{F}_{i,\alpha}\lra{\mathcal{F}_{i-1,\lambda}}$ 
takes the form 
$\partial^{\alpha,\lambda}_i=x^{\eta(\alpha)-\eta(\lambda)}\otimes\varphi_{i}^{\alpha,\lambda}$ 
for $\lambda\ld\alpha$.  

Set $F_0=R\otimes_{\Bbbk}\mathcal{D}_0$ and define the multigrading of 
$x^\bfa\otimes{v}$ as $\bfa$ for each $v\in \mathcal{D}_0$.
The differential 
$\partial^{\alpha,\lambda}_1:\mathcal{F}_{1,\alpha}\lra{\mathcal{F}_{0,\lambda}}$ 
is defined componentwise as  
$$
\partial_1\arrowvert_{F_{1,\lambda}}
=x^{\eta(\lambda)}\otimes\varphi_1\arrowvert_{\mathcal{D}_{1,\lambda}}.
$$

Having established a commutative algebra interpretation for 
the objects of Section \ref{S:TwoComplexes}, 
we now state the motivating result of this paper. 

\begin{theorem}\label{CellPosetRes}
Let $N$ be a monomial ideal and suppose that $\mathcal{F}$ is a resolution of $N$.  
The resolution $\mathcal{F}$ is supported on a regular CW-complex $X$ if and only if 
$\mathcal{F}$ is a poset resolution supported on the CW-poset $P_X$.  
\end{theorem}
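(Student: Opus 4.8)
The plan is to derive Theorem~\ref{CellPosetRes} as a direct consequence of Theorem~\ref{ChainComplexIso}, by observing that the homogenization procedures applied to $\mathcal{C}(X)$ and to $\mathcal{D}(P_X)$ are compatible with the canonical isomorphism of Theorem~\ref{ChainComplexIso}. The key point is that both $\mathcal{F}_X$ and $\mathcal{F}(\eta)$ are built from their respective underlying complexes of $\Bbbk$-vector spaces by the same two operations: tensoring each graded piece with $R$ and twisting the multidegree by the value of the labeling function on the relevant poset element (equivalently, cell), and then correcting each component of the differential by the monomial ratio $x^{\eta(\alpha)-\eta(\lambda)}$ (equivalently $m_\sigma/m_\tau$). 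So the first step is to make precise that Bj\"orner's correspondence is multidegree-preserving: under the identification of cells $\sigma$ of $X$ with rank-$(\dim\sigma+1)$ elements of $P_X$, the multigrading $m_\sigma$ on $X$ and the map $\eta$ on $P_X$ agree, because $\eta$ is defined as the map induced by the multigrading on $X$ and $m_\sigma=\lcm(m_j \mid j\in V_\sigma)$ depends only on the vertex set, hence only on the poset element.

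Second, I would spell out that the canonical isomorphism $\Theta\colon \mathcal{C}(X)\to\mathcal{D}(P_X)$ of Theorem~\ref{ChainComplexIso} carries the basis element (or homology class) attached to a cell $\sigma$ to something living in the summand $\mathcal{D}_{i,\alpha}$ indexed by the corresponding $\alpha$, so that $\Theta$ is a homogeneous isomorphism of $\mathbb{Z}^n$-graded vector spaces once we assign to the $\sigma$-summand of $\mathcal{C}_i$ the multidegree $m_\sigma=\eta(\alpha)$. Tensoring $\Theta$ with $R$ then yields an isomorphism $\mathrm{id}_R\otimes\Theta\colon \mathcal{F}_X\to\mathcal{F}(\eta)$ of $\mathbb{Z}^n$-graded free $R$-modules. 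The remaining verification is that $\mathrm{id}_R\otimes\Theta$ commutes with the differentials: on the component from the $\sigma$-summand to the $\tau$-summand, the differential of $\mathcal{F}_X$ is $c_{\sigma,\tau}(m_\sigma/m_\tau)$ and that of $\mathcal{F}(\eta)$ is $x^{\eta(\alpha)-\eta(\lambda)}\otimes\varphi_i^{\alpha,\lambda}$; since the scalar parts $c_{\sigma,\tau}$ and $\varphi_i^{\alpha,\lambda}$ already match by Theorem~\ref{ChainComplexIso} and the monomial factors coincide by the multidegree compatibility just established, the square commutes. Thus $\mathcal{F}_X\cong\mathcal{F}(\eta)$ as complexes of multigraded $R$-modules.

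Third, with this isomorphism in hand the theorem follows formally. If $\mathcal{F}$ is a resolution of $N$ supported on the regular CW-complex $X$, then by definition $\mathcal{F}\cong\mathcal{F}_X\cong\mathcal{F}(\eta)$, and $\mathcal{F}(\eta)$ is by construction the poset construction of \cite{Clark2010} applied to $P_X$ and homogenized via $\eta$; hence $\mathcal{F}$ is a poset resolution supported on the CW-poset $P_X$. Conversely, if $\mathcal{F}$ is a poset resolution supported on the CW-poset $P_X$, then $\mathcal{F}\cong\mathcal{F}(\eta)\cong\mathcal{F}_X$, and since $P_X$ is a CW-poset it is (by Bj\"orner's Proposition~3.1, quoted above) the face poset of a regular CW-complex, which we may take to be $X$ itself; thus $\mathcal{F}$ is supported on the regular CW-complex $X$. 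One should also remark that a poset $P$ over which $\mathcal{D}(P)$ is an exact complex and which arises here is automatically a CW-poset precisely when it is the face poset of a regular CW-complex, so the two notions of ``support'' are genuinely interchangeable.

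The main obstacle I anticipate is purely bookkeeping: verifying carefully that the isomorphism produced in the proof of Theorem~\ref{ChainComplexIso} is not merely an abstract isomorphism of complexes but one that respects the cell-indexed direct-sum decompositions on both sides, so that it is homogeneous for the induced $\mathbb{Z}^n$-gradings and so that the per-component monomial twists $m_\sigma/m_\tau$ versus $x^{\eta(\alpha)-\eta(\lambda)}$ really match component-by-component. This requires tracking that each isomorphism in the chain displayed in the proof of Theorem~\ref{ChainComplexIso} (the reindexing map, the relative-homology identifications, the Bj\"orner CW-isomorphism) is block-diagonal with respect to the decomposition indexed by cells/poset elements of a fixed rank, which is exactly what the diagram~(\ref{CWposetCD}) already records; the homogenization then adds nothing beyond tensoring with $R$ and inserting the monomial factor dictated by the difference of labels. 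Everything else is a formal unwinding of the definitions of ``cellular resolution'' and ``poset resolution.''
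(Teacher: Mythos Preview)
Your proposal is correct and follows exactly the paper's approach: the paper's proof is the single sentence ``By Theorem~\ref{ChainComplexIso}, the homogenizations of $\mathcal{D}(P_X)$ and $\mathcal{C}(X)$ both produce the same chain complex,'' and your argument is a careful unpacking of precisely this claim. The bookkeeping you flag---that the isomorphism of Theorem~\ref{ChainComplexIso} is block-diagonal with respect to the cell/rank decomposition and hence compatible with the monomial twists---is the only substantive content, and it is implicit in the diagram~(\ref{CWposetCD}) as you note.
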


\begin{proof}
By Theorem \ref{ChainComplexIso}, 
the homogenizations of $\mathcal{D}(P_X)$ and 
$\mathcal{C}(X)$ both produce the same chain complex. 
\end{proof} 

\begin{remark}
Using the language of Peeva and Velasco \cite{PeevaVelasco2011}, 
the chain complexes $\mathcal{D}(P_X)$ and $\mathcal{C}(X)$ 
are isomorphic \emph{frames} for the resolution $\mathcal{F}$. 
\end{remark}

We now apply Theorem \ref{CellPosetRes} to several well-studied resolutions of monomial ideals. 

\begin{corollary}
The Taylor, Scarf, and Lyubeznik resolutions are CW-poset resolutions. 
\end{corollary}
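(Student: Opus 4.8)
The plan is to invoke Theorem~\ref{CellPosetRes} and reduce the corollary to the classical fact that each of these resolutions is already known to be \emph{cellular}, i.e., supported on a regular CW-complex. The Taylor resolution of a monomial ideal $N$ with $r$ minimal generators is supported on the full $(r-1)$-simplex $\Delta^{r-1}$, with vertices labeled by the generators; this is a regular CW-complex, so Theorem~\ref{CellPosetRes} immediately realizes it as a poset resolution supported on the face poset of $\Delta^{r-1}$ (the boolean lattice on $r$ atoms, with $\hat 0$ adjoined). For the Scarf resolution, when it is a resolution (i.e., when the algebraic Scarf complex equals the Scarf simplicial complex), it is supported on the Scarf simplicial complex $\Delta_N$, which is a simplicial, hence regular, CW-complex; again Theorem~\ref{CellPosetRes} gives a poset resolution on $P_{\Delta_N}$.

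For the Lyubeznik resolution one must be a little more careful, since the usual description is as a subcomplex of the Taylor complex determined by a fixed total order on the generators, consisting of those faces $\{m_{i_0}<\cdots<m_{i_t}\}$ that are \emph{rooted}. The key point I would make is that this subcomplex of the simplex is again a simplicial complex: it is closed under passing to faces, because the rootedness condition is inherited by subsets (if no generator earlier than $\min$ divides the lcm of a face, the same holds for any subface, since its lcm divides that of the face). Hence the Lyubeznik complex $L_N$ is a genuine (abstract) simplicial complex, its geometric realization is a regular CW-complex, and the Lyubeznik resolution is the homogenization of its cellular chain complex. Theorem~\ref{CellPosetRes} then applies verbatim to produce a CW-poset resolution supported on $P_{L_N}$.

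So the structure of the proof is: (1) recall that each resolution is, by its standard construction, of the form $\mathcal{F}_X$ for an explicit simplicial (hence regular CW) complex $X$ — the simplex for Taylor, the Scarf complex for Scarf, the rooted subcomplex for Lyubeznik; (2) cite the references establishing that $\mathcal{F}_X$ is indeed a resolution of $N$ in each case; (3) apply Theorem~\ref{CellPosetRes} to conclude that each is a poset resolution on the corresponding CW-poset $P_X$. The only step that requires an actual (small) argument rather than a citation is verifying that the Lyubeznik complex is simplicial, i.e., downward closed; I expect this to be the main, though minor, obstacle, and it is a one-line divisibility observation about lcms of subsets. Everything else is bookkeeping: matching the homogenized cellular differential with the given generator-labels, which is exactly the content already packaged in Theorem~\ref{CellPosetRes} and the definition of $\mathcal{F}_X$.
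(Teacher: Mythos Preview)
Your proposal is correct and follows essentially the same route as the paper: observe that each of the three resolutions is supported on a simplicial (hence regular CW) complex and then apply Theorem~\ref{CellPosetRes} to the corresponding face poset. The paper dispatches the simplicial-support step in one line by citing Mermin's survey \cite{Mermin2012}, whereas you unpack the details yourself; your one-line justification of downward closure for the Lyubeznik complex slightly understates the recursive rootedness condition, but the conclusion is standard and the overall argument is the same.
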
 

\begin{proof}
Each of these resolutions are supported on a simplicial complex, 
as described by Mermin in \cite{Mermin2012}. Hence, applying Theorem \ref{CellPosetRes} 
to the face poset of the associated simplicial complex in each case recovers the 
desired CW-poset resolution. 
\end{proof}

Recall that a monomial ideal $N$ is said to be \emph{stable} 
if for every monomial $m\in{N}$, the monomial $m\cdot{x_i}/{x_r}\in{N}$ 
for each $1\le{i}<r$, where $r=\max\{k:x_k\textrm{ divides }m\}$.  
Eliahou and Kervaire \cite{EliahouKervaire1990} first gave a construction 
for the minimal free resolution of a stable ideal, which Mermin \cite{Mermin2010} 
reinterpreted as a resolution supported on a regular CW-complex. 
By using Bj\"orner's correspondence along with Theorem \ref{CellPosetRes}, 
we recover Mermin's result.  

\begin{corollary}
The Eliahou-Kervaire resolution is supported on a regular CW-complex. 
\end{corollary}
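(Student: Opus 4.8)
The plan is to argue entirely on the poset side of Bj\"orner's correspondence and to transfer the conclusion through Theorem~\ref{CellPosetRes}. Explicitly, the aim is to produce a CW-poset $P$ together with a multigrading $\eta\colon P\lra\mbb{Z}^n$ for which the homogenization $\mathcal F(\eta)$ of the poset construction $\mathcal D(P)$ is exactly the Eliahou--Kervaire resolution of the stable ideal $N$. Given such a $P$, Bj\"orner's characterization (Proposition~3.1 of \cite{Bjorner1984}) realizes $P$ as the face poset $P_X$ of a regular CW-complex $X$, and Theorem~\ref{CellPosetRes} then yields that the resolution is supported on $X$; since it is the Eliahou--Kervaire resolution, this recovers the result of Mermin \cite{Mermin2010} without recourse to his explicit cell structure.

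For the poset $P$ I would take the poset obtained by adjoining a least element $\hat 0$ below the poset of Eliahou--Kervaire admissible symbols $(m,S)$, where $m$ runs over the minimal generators of $N$ and $S\subseteq\{1,\dots,\max(m)-1\}$, with $\rk(m,S)=|S|+1$, the partial order chosen so that its covering relations encode the incidences of the Eliahou--Kervaire complex, and with $\eta(m,S)$ the multidegree of the corresponding Eliahou--Kervaire basis element. It is established in \cite{Clark2012} that the homogenization of $\mathcal D(P)$ for this $P$ and $\eta$ is the Eliahou--Kervaire resolution; granting this, the proof reduces to the single assertion that \emph{$P$ is a CW-poset}, that is, that for every symbol $x$ the open interval $(\hat 0,x)$ is homeomorphic to a sphere of dimension $\rk(x)-2$. (If one imports from \cite{Clark2012} that $P$ is already known to be a CW-poset, only the formal transfer described in the last paragraph remains.)

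I would verify the CW-poset condition by induction on rank. Conditions (1) and (2) in the definition are immediate, and the rank-one and rank-two intervals below a symbol are visibly a point and a $0$-sphere. For higher rank, the task is to describe $\Delta(\hat 0,(m,S))$ combinatorially and identify its homeomorphism type: the subset coordinate $S$ contributes an honest Boolean interval, so that if the order restricted to symbols below $(m,S)$ were literally a product of a Boolean lattice with a chain, the order complex would be the barycentric subdivision of the boundary of a simplex. The Borel-type exchange conditions on the monomial coordinate deform this slightly, so the real work is to produce a shelling of (a cone over) $\Delta(\hat 0,(m,S))$, or equivalently to show directly that the complex is a shellable pseudomanifold with reduced homology concentrated in top degree. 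A connectedness argument together with an Euler-characteristic count then upgrades ``pseudomanifold'' to ``sphere'' of the expected dimension.

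The main obstacle is precisely this identification: pinning down the homeomorphism type of $\Delta(\hat 0,(m,S))$ when the Borel-move coordinate and the subset coordinate interact, so that the interval is not literally a Boolean lattice. Everything after that is formal bookkeeping: apply Proposition~3.1 of \cite{Bjorner1984} to obtain the regular CW-complex $X$ with $P_X\cong P$, invoke Theorem~\ref{ChainComplexIso} to identify the homogenization of $\mathcal D(P_X)$ with the cellular complex $\mathcal F_X$, and conclude via Theorem~\ref{CellPosetRes} that $\mathcal F_X$ resolves $N$, which is the statement that the Eliahou--Kervaire resolution is supported on a regular CW-complex.
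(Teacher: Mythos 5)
Your proposal is correct and follows essentially the same route as the paper: the poset of admissible symbols (with $\hat 0$ adjoined) is, by the main result of \cite{Clark2012}, an EL-shellable CW-poset supporting the Eliahou--Kervaire resolution as a poset resolution, and Theorem~\ref{CellPosetRes} then transfers this to a resolution supported on the regular CW-complex whose face poset is that poset. The paper simply imports the CW-poset property from \cite{Clark2012} rather than re-deriving the sphericity of the intervals $(\hat 0,(m,S))$ as you sketch, so your ``formal transfer'' branch is exactly the published argument.
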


\begin{proof}
The main result of \cite{Clark2012} establishes that the Eliahou-Kervaire resolution 
is supported on an EL-shellable CW-poset (of admissible symbols). 
Theorem \ref{CellPosetRes} therefore applies, and the minimal resolution is supported 
on the regular CW-complex whose face poset is isomorphic to this poset of admissible symbols. 
\end{proof}

Another well-studied combinatorial object which can be associated to a monomial ideal 
also serves as a source for CW-posets which support minimal free resolutions. 
Recall that the \emph{lcm-lattice} of a monomial ideal $N$ is 
the set  $L_N$ of least common multiples of the minimal generators of $N$, 
along with $1$ (considered to be the least common 
multiple of the empty set).  Ordering in the lcm-lattice is 
given by $m'<m$ if and only if $m'$ divides $m$.  The homological 
importance of $L_N$ was established by Gasharov, Peeva, and Welker in 
\cite{GasharovPeevaWelker1999}. Motivated by their work, 
the class of lattice-linear ideals was studied by the first author in \cite{Clark2010}. 

\begin{definition}\label{LLDef}
Let $\mathcal{F}$ be a minimal multigraded free resolution of $R/N$.
The monomial ideal $N$ is \emph{lattice-linear} if multigraded
bases $B_k$ of the free modules in $\mathcal{F}$ can be fixed for all $k$ 
so that for any $i\ge 1$ and any $e_m \in{B_i}$ the differential 
$$\partial^{\mathcal{F}}(e_m)=\sum_{e_{m'}\in{B_{i-1}}}c_{m,m'}\cdot{e_{m'}}$$ 
has the property that if the coefficient $c_{m,m'}\ne 0$ 
then $m'\ld m \in{L_N}$.
\end{definition}

With the extension of Bj\"orner's correspondence complete, 
the notion of lattice-linearity may be viewed topologically 
when the lcm-lattice exhibits a precise enough combinatorial structure. 

\begin{corollary}\label{lcmLatticeCW}
If the lcm-lattice of a monomial ideal is a CW-poset then the ideal is 
lattice-linear. Furthermore, such an ideal has a minimal cellular resolution. 
\end{corollary}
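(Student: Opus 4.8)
The plan is to apply Theorem~\ref{CellPosetRes}, using as input the regular CW-complex $X$ guaranteed by Bj\"orner's correspondence from the hypothesis that $L_N$ is a CW-poset. First I would invoke Bj\"orner's characterization (Proposition~3.1 of \cite{Bjorner1984}, quoted above) to produce a regular CW-complex $X$ with $P_X\cong L_N$. Next I would transport the multigrading: since the minimal generators of $N$ are precisely the atoms of $L_N$, they label the vertices of $X$, and the lcm-labeling $m_\sigma=\lcm(m_j\mid j\in V_\sigma)$ on cells of $X$ agrees under the isomorphism $P_X\cong L_N$ with the tautological labeling of an element of $L_N$ by itself. In particular the map $\eta:P_X\to\mathbb Z^n$ is injective and order-reflecting onto $L_N$.

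The key step is then to verify that $\mathcal{F}(\eta)$ (equivalently, by Theorem~\ref{ChainComplexIso}, the homogenized cellular chain complex $\mathcal{F}_X$) is actually a resolution of $N$. For this I would use the Bayer--Sturmfels criterion (Proposition~1.2 of \cite{BayerSturmfels1998}): one must show $X_{\preceq\mathbf b}$ is $\Bbbk$-acyclic for every $\mathbf b\in\mathbb Z^n$. Under the identification $P_X\cong L_N$, the subcomplex $X_{\preceq\mathbf b}$ corresponds to the set of $m\in L_N$ with $m$ comparable to $\mathbf b$; because $L_N$ is a meet-semilattice with $\hat 0=1$, either no generator divides $x^{\mathbf b}$ — in which case $X_{\preceq\mathbf b}=\{\hat 0\}$ is a point, hence acyclic — or the set of elements of $L_N$ dividing $x^{\mathbf b}$ has a unique maximal element $m_{\mathbf b}=\lcm\{m_j\mid m_j\mid x^{\mathbf b}\}$, so $X_{\preceq\mathbf b}$ is the closed cell $\mathbf D_{m_{\mathbf b}}$ (together with the faces comparable from above, which contribute nothing new after noting $X_{\preceq\mathbf b}$ is a cone, equivalently a disk), hence contractible. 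This is essentially the Gasharov--Peeva--Welker observation \cite{GasharovPeevaWelker1999} that the order complex of a principal ideal in a lattice is a cone; I would cite it rather than reprove it. Thus $\mathcal{F}_X$ resolves $N$, and by Theorem~\ref{CellPosetRes} the homogenization of $\mathcal{D}(P_X)$ — i.e.\ $\mathcal{F}(\eta)$ — is a poset resolution of $N$ supported on the CW-poset $P_X\cong L_N$, which is exactly the statement that $N$ is lattice-linear: the differential of $\mathcal{F}(\eta)$ has components $\partial_i^{\alpha,\lambda}$ only for $\lambda\lessdot\alpha$ in $P_X\cong L_N$, matching Definition~\ref{LLDef}.

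For the "furthermore" clause I would argue minimality by a multidegree count: the $i$-th free module of $\mathcal{F}(\eta)$ in multidegree $m\in L_N$ has rank $\sum_{\rk(\alpha)=i,\ \eta(\alpha)=m}\dim_{\Bbbk}\redHo_{i-2}(\Delta_\alpha)=\dim_{\Bbbk}\redHo_{i-2}(\Delta(\hat 0,m))$, and since any free resolution's Betti numbers are bounded below by these reduced homology dimensions of open intervals in $L_N$ (again \cite{GasharovPeevaWelker1999}), the resolution $\mathcal{F}(\eta)=\mathcal{F}_X$ attains the minimum and is therefore minimal; being supported on the regular CW-complex $X$, it is the desired minimal cellular resolution. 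The main obstacle is the acyclicity verification of $X_{\preceq\mathbf b}$ — specifically making precise that the relevant subposet of $L_N$ is conically contractible and translating this correctly through Bj\"orner's correspondence — but this reduces cleanly to the cited semilattice fact.
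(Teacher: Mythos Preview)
Your argument is correct but follows a genuinely different route from the paper's. You build the regular CW-complex $X$ with $P_X\cong L_N$ first, verify via the Bayer--Sturmfels criterion that $\mathcal{F}_X$ is a resolution (using that $X_{\preceq\mathbf b}$ is either empty or a single closed cell, hence a disk), check minimality by matching ranks against the Gasharov--Peeva--Welker Betti-number formula, and then read off lattice-linearity from the fact that the differential of $\mathcal{F}(\eta)$ is assembled from components $\partial_i^{\alpha,\lambda}$ indexed by covers $\lambda\lessdot\alpha$. The paper instead works directly with the abstract minimal resolution: it uses \cite{GasharovPeevaWelker1999} together with the CW-poset hypothesis to see that each $m\in L_N$ contributes exactly one basis element, necessarily in homological degree $\rk(m)$, and then argues by contradiction that the differential must respect covers --- if $c_{m,m'}\ne 0$ with $m'<\ell<m$, then $(\hat 0,\ell)$ would have to be a sphere of dimension strictly between $i-3$ and $i-2$. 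Only afterward does it invoke Theorem~\ref{CellPosetRes} to reinterpret the resulting poset resolution as cellular. Your approach is more constructive and makes Theorem~\ref{CellPosetRes} do the real work; the paper's approach avoids the Bayer--Sturmfels acyclicity check altogether and gets lattice-linearity by a short rank argument. Two minor points on your write-up: the parenthetical about ``faces comparable from above'' is superfluous and slightly confused --- once $m_{\mathbf b}$ exists, $X_{\preceq\mathbf b}$ is exactly the closure of the cell indexed by $m_{\mathbf b}$, with nothing further to account for --- and logically your lattice-linearity conclusion should be stated only after minimality is established, since Definition~\ref{LLDef} is a condition on the \emph{minimal} resolution.
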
 

\begin{proof} 
Write $N$ for a monomial ideal and $L_N$ for the CW-poset which is its lcm-lattice. 
Since $L_N$ is a CW-poset, the open interval $(1,m)$ is homeomorphic to 
a sphere for every $m\in L_N$. Theorem 2.1 of \cite{GasharovPeevaWelker1999} 
therefore guarantees that each monomial of $L_N$ corresponds to exactly 
one free module in the minimal free resolution of $N$. 

Aiming for a contradiction, suppose that $N$ is not lattice-linear. Hence, there exists 
a basis element $e_m\in{B_i}$ for some $i$ and a basis element $e_{m'}\in B_{i-1}$ 
for which the coefficient $c_{m,m'}\ne 0$ in the expansion of the differential 
$$\partial^{\mathcal{F}}(e_m)=\sum_{e_{m'}\in{B_{i-1}}}c_{m,m'}\cdot{e_{m'}},$$ 
but $m'$ is not covered by $m$ in $L_N$. Hence, there must exist $\ell\in L_N$ 
for which $m'<\ell<m$. However, the interval $(1,m)$ is homeomorphic to 
an $(i-2)$-sphere, and the interval  $(1,m')$ is homeomorphic to an $(i-3)$-sphere. 
Since $L_N$ is a CW-poset, the open interval $(1,\ell)$ also is homeomorphic to a sphere. 
However, $(1,\ell)$ cannot be homeomorphic to a sphere of dimension $i-2$, nor 
can it be homeomorphic to a sphere of dimension $i-3$. Indeed, the order complex 
of $(1,\ell)$ is a proper subcomplex of the $(i-2)$-dimensional sphere $(1,m)$. Moreover, 
it contains the $(i-3)$-dimensional complex $(1,m')$ as a proper subcomplex. 
Hence, $(1,\ell)$ must be a $j$-dimensional sphere where $i-3<j<i-2$, a contradiction. 
Therefore, such a monomial $\ell$ cannot exist, which means that $N$ is indeed lattice-linear. 

Write $X_N$ for the regular CW-complex 
whose face poset is isomorphic to the lcm-lattice $L_N$. 
The cells of $X_N$ inherit their multidegree directly 
from the corresponding monomial in the lcm-lattice. 
Note that since $L_N$ is a lattice, then $X_N$ 
is a regular CW-complex with unique cell of top dimension, 
which has the property that any pair of cells intersect in a cell. 
Such a CW-complex is said to have the \emph{intersection property}. 
Applying Theorem \ref{CellPosetRes}, the minimal CW-poset resolution 
supported on $L_N$ may be reinterpreted as a minimal 
cellular resolution supported on $X_N$. 
\end{proof} 

\begin{remark}
In fact, more can be said about monomial ideals whose lcm-lattice is a CW-poset. Indeed, 
such ideals are \emph{rigid} in the sense of \cite{ClarkMapes2013}. In general, a rigid ideal 
has a minimal free resolution with unique multigraded basis. In the case 
when the lcm-lattice is a CW-poset, there is a unique regular CW-complex which 
supports the minimal multigraded resolution. The edge ideals of complete bipartite graphs 
studied by Visscher \cite{Visscher2006} are a class of ideals whose lcm-lattice 
is a CW-poset. 
\end{remark} 

In closing, we note that the property of an lcm-lattice being a CW-poset 
is not a necessary condition for lattice-linearity or rigidity. The ideal given by Velasco, 
whose resolution is not supported on any CW-complex \cite{Velasco2008} 
is a lattice-linear, rigid monomial ideal and hence, admits a minimal poset resolution. 
There are many lattice-linear ideals whose lcm-lattice is not a CW-poset. 
The failure of topological methods to completely encode the structure of free resolutions 
allows us to consider the following natural questions, which are the subject of ongoing research. 

\begin{question}
Which CW resolutions supported on a non-regular CW-complex can be realized as poset 
resolutions? Can every resolution be realized as a poset resolution? 
\end{question} 

A positive answer to each of these two questions is given 
respectively in Wood \cite{WoodThesis} and Clark and Tchernev \cite{ClarkTchernevMinSupp}. 
 
\bibliographystyle{amsalpha}
\bibliography{../Bibliography/TBPC-bibliography}

\end{document}